\title{Constant Mean Curvature Surfaces for the Bessel Equation}
\author{E. Mota}
\address{Eduardo Mota, Department of Mathematics, University College Cork, Ireland.}
\email{eduardo.motasanchez@ucc.ie}
\thanks{{\it Mathematics Subject Classification.} 53A10. \today}
\theoremstyle{plain}
\newtheorem{proposition}{Proposition}
\newtheorem{theorem}{Theorem}
\newtheorem{lemma}{Lemma}
\newtheorem{definition}{Definition}
\newtheorem{remark}{Remark}
\numberwithin{equation}{section}
\newcommand{\Z}{\mathbb{Z}}
\newcommand{\R}{\mathbb{R}}
\newcommand{\Sp}{\mathbb{S}}
\newcommand{\C}{\mathbb{C}}
\newcommand{\bigO}{\mathcal{O}}
\newcommand{\Iso}{\mathrm{Iso}}
\newcommand{\su}{\mathfrak{su}_2}
\newcommand{\sll}{\mathfrak{sl}_2}
\newcommand{\SL}{\mathrm{SL}_2}
\newcommand{\SU}{\mathrm{SU}_2}
\newcommand{\CMC}{$\mathrm{CMC}\;$}
\newcommand{\ODE}{$\mathrm{ODE}\;$}
\newcommand{\dd}{\mathrm{d}}
\newcommand{\Sym}{\mathrm{Sym}}
\newcommand{\identity}{\mathbbm{1}}
\DeclareMathOperator{\tr}{tr}
\DeclareMathOperator{\diag}{diag}
\DeclareMathOperator{\id}{id}
\DeclareMathSymbol{\Nu}{\mathalpha}{operators}{"4E}
\DeclareMathAlphabet{\mathcal}{OMS}{cmsy}{m}{n}
\SetMathAlphabet{\mathcal}{bold}{OMS}{cmsy}{b}{n}
\begin{document}

  \begin{abstract}
    In this note we construct a family of immersions with constant mean curvature of the twice-punctured Riemann sphere into $\R^3$ from the Bessel equation.
  \end{abstract}

  \maketitle

  \section*{Introduction}

    The generalised Weierstrass representation due to Dorfmeister, Pedit and Wu \cite{DPW} can be used to construct locally conformal constant mean curvature ($\mathrm{CMC}$) immersions in Euclidean $3$-space from a holomorphic 1-form $\xi$ on a Riemann surface $\Sigma$. Several examples of \CMC surfaces have been made using these techniques by Dorfmeister, Wu \cite{DorWu}, Haak \cite{DorH:cyl}, Kilian, McIntosh, Schmitt \cite{KIS}, Kobayashi, Rossmann \cite{KilKRS} and Traizet \cite{Tr:noids}, among others. Also, more recently \cite{Tr:nodes}, Traizet uses an \emph{opening nodes} method in the generalised Weierstrass representation set-up to obtain embedded \CMC surfaces with Delaunay ends, with arbitrary genus and number of ends.\\

    In the spirit of \cite{KMS}, our aim is to employ a second order differential equation, the Bessel equation, to find a new family of cylinders with constant mean curvature. This family possess one asymptotically Delaunay end and one irregular end, corresponding respectively to the regular and irregular singularities in the $\mathrm{ODE}$. The first step in the construction of \CMC surfaces is to write down a suitable potential $\xi$. That one of the ends of the surfaces is asymptotic to half Delaunay surface follows from the fact that at this end $\xi$ is a perturbation of the potential  of a Delaunay surface \cite{KilRS}. Since $\Sigma$ is the twice-punctured Riemann sphere, it is enough to guarantee closing conditions at the Delaunay end in order to solve the period problem.\\

    In \cref{sec:gwr} we outline the constructing method for \CMC surfaces, due to \cite{DPW}. The \cref{sec:BE} has as purpose introducing the Bessel equation in our recipe for surfaces with constant mean curvature, along with a brief explanation of its features. \Cref{sec:perturbations} recalls the very well-known theory regarding \CMC surfaces with Delaunay ends and perturbations of Delaunay potentials (see \cite{KilRS} and \cite{KilKRS}, among others). In the main section in this note, \ref{sec:cylinders}, we write down the potential that constructs \CMC cylinders via the Bessel equation and put together in \cref{th:maintheorem} the different ingredients for the construction. Finally, in \cref{sec:symmetries} we prove the existence of a reflectional symmetry in the \CMC cylinders that we have constructed. Graphics in figures \ref{fig:examples1} and \ref{fig:examples2} were produced with CMCLab \cite{Sch}.

  %

  \section{The generalised Weierstrass representation}\label{sec:gwr}

    Let us briefly recall the generalized Weierstrass representation \cite{DPW} to set the notation. We refer the reader to \cite{KilKRS} for details on this method, which comprises the following steps:

    \begin{enumerate}
      \item On a connected Riemann surface $\Sigma$, let $\xi$ be a holomorphic 1-form, called \emph{potential}, with values in the loop algebra of maps $\Sp^1\to\sll(\C)$. The potential $\xi$  has to have a simple pole in its upper right entry in the loop parameter $\lambda$ at $\lambda=0$, and has no other poles for $\lambda<1$. Moreover, the upper-right entry of $\xi$ is non-zero on $\Sigma$.  Let $\Phi$ be a solution of
      \begin{equation}\label{eq:IVP}
        \begin{cases}
          \dd\Phi = \Phi\,\xi\\
          \Phi(z_0) = \Phi_0.
        \end{cases}
      \end{equation}
      \item Let $\Phi=F\,B$ be the point-wise Iwasawa factorization on the universal cover $\widetilde\Sigma$.
      \item Then, the Sym-Bobenko formula $f:=\Sym[F_\lambda]=\left(\partial_\lambda F\right)F^{-1}$ gives an associated family of conformal \CMC immersions $\widetilde\Sigma \to \su\cong\R^3$.
    \end{enumerate}
        
    If $g$ is a map on $\Sigma$ with values in a positive loop group of $\SL(\C)$, then $\xi.g$ is again a potential. The gauge action is defined by $\xi.g:=g^{-1}\xi g+g^{-1}\dd g$. If $\dd\Phi = \Phi\,\xi$, then $\Phi g$ solves $\dd\Psi=\Psi(\xi.g)$.\\

    Now let $\Sigma = \C^*$ be the twice-punctured Riemann sphere. Let $\Delta$ denote the group of deck transformations of the universal cover $\widetilde\Sigma$, that is, $\Delta=\Z$ and $\widetilde\Sigma=\C$. The group of deck transformations $\Delta$ is generated by
    \begin{equation}\label{eq:tau}
      \tau:\log z\mapsto\log z+2\pi i.
    \end{equation}
    Let $\xi$ be a holomorphic potential on $\Sigma$. Let $\Phi$ be a solution of $\dd\Phi = \Phi \xi$. Since $\Sigma$ is not simply connected, in general $\Phi$ is only defined on $\widetilde\Sigma$. Let $\tau$ be a loop around the puncture $z=0$, we define the \emph{monodromy matrix} $M$ of $\Phi$ along $\tau$ by
    \begin{equation}\label{eq:defmonodromy}
      M = (\tau^*\Phi) \, \Phi^{-1}.
    \end{equation}
    In order to construct \CMC cylinders, the sufficient closing conditions for the monodromy that ensure that the immersion is well-defined are (see \cite{KilKRS}):
    \begin{subequations}\label{eq:monodromyproblem}
      \begin{align}
        M &\in\SU,\;\text{for all}\;\lambda\in\Sp^1,\label{eq:monodromyproblem1}\\
        \left. {M}\right|_{\lambda = 1} &= \pm\identity,\label{eq:monodromyproblem2}\\
        \left. \partial_\lambda M \right|_{\lambda = 1} &= 0.\label{eq:monodromyproblem3}
      \end{align}
    \end{subequations}
    Since $\Sigma=\C^*$, the monodromy group is infinite cyclic. Thus the fundamental group of the twice-punctured Riemann sphere has $1$ generator and we only need to solve the monodromy problem (\ref{eq:monodromyproblem}) for the loop $\tau$ around the puncture $z=0$.
  %

  \section{The Bessel equation}\label{sec:BE}

    In this section we show how the Bessel equation constructs \CMC surfaces with two ends. We start by prescribing this \ODE in the first step (\ref{eq:IVP}) of the Weierstrass recipe \cite{DPW} and then we point out the relevant features of this differential equation for the purpose of our work, namely, we describe its singularities' behaviour.\\

    Let us consider without loss of generality a potential $\xi$ of the form
    \begin{equation}\label{eq:offdpot}
      \xi=\begin{pmatrix} 0 & \nu(z,\lambda)\\ \rho(z,\lambda) & 0\end{pmatrix}\;dz.
    \end{equation}
    The strategy to associate a scalar second order \ODE to our constructing algorithm is illustrated in the following
    \begin{lemma}\label{th:associatedode}
        Solutions of $\dd\Phi=\Phi\xi$ are of the form
        \begin{equation}\label{eq:generalsolution}
        \begin{pmatrix} y_1'/\nu & y_1 \\ y_2'/\nu & y_2 \end{pmatrix}
        \end{equation}
        where $y_1$ and $y_2$ are a fundamental system of the scalar \ODE
        \begin{equation}\label{eq:scalar2ndode}
          y'' - \frac{\nu'}{\nu}\, y' - \rho\,\nu\,y = 0.
        \end{equation}
    \end{lemma}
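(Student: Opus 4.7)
The plan is to reduce the matrix ODE to the stated scalar ODE by a direct entry-wise computation. First I would write $\Phi=\begin{pmatrix}a & b\\ c & d\end{pmatrix}$ and expand the matrix equation $\dd\Phi=\Phi\,\xi$ with $\xi$ as in \eqref{eq:offdpot}. Computing
\begin{equation*}
\Phi\,\xi=\begin{pmatrix}a & b\\ c & d\end{pmatrix}\begin{pmatrix}0 & \nu\\ \rho & 0\end{pmatrix}\,\dd z=\begin{pmatrix}b\rho & a\nu\\ d\rho & c\nu\end{pmatrix}\,\dd z,
\end{equation*}
the single matrix ODE breaks into the four scalar relations $a'=b\rho$, $b'=a\nu$, $c'=d\rho$, $d'=c\nu$ (with $'=\dd/\dd z$).

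Next, I would use the two relations $b'=a\nu$ and $d'=c\nu$ to solve $a=b'/\nu$ and $c=d'/\nu$. This division is legitimate because the generalised Weierstrass setup stipulates that the upper-right entry of $\xi$ be non-zero on $\Sigma$, so $\nu$ does not vanish. Substituting these expressions into the remaining relations $a'=b\rho$ and $c'=d\rho$ and clearing denominators produces
\begin{equation*}
b''-\frac{\nu'}{\nu}\,b'-\rho\,\nu\, b=0,\qquad d''-\frac{\nu'}{\nu}\,d'-\rho\,\nu\, d=0,
\end{equation*}
both of which are exactly the scalar ODE \eqref{eq:scalar2ndode}. Setting $y_1:=b$ and $y_2:=d$ then yields the matrix shape displayed in \eqref{eq:generalsolution}.

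For the converse direction and the \emph{fundamental system} phrasing, I would note that running the computation backwards shows that for any pair of solutions $y_1,y_2$ of \eqref{eq:scalar2ndode}, the matrix in \eqref{eq:generalsolution} solves $\dd\Phi=\Phi\,\xi$. A quick Wronskian calculation gives $\det\Phi=-W(y_1,y_2)/\nu$, so $\Phi$ is non-degenerate exactly when $y_1$ and $y_2$ are linearly independent, matching the fundamental-system condition expected of the solution to the initial value problem \eqref{eq:IVP}.

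I do not foresee any real obstacle; the argument is essentially a one-line substitution, and the only subtle point is the non-vanishing of $\nu$, which is a standing hypothesis on the potential throughout this work.
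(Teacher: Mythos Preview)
Your argument is correct and is exactly the standard verification one would give. The paper does not actually supply a proof of this lemma; it is stated without proof and immediately used, presumably because the computation is elementary. Your entry-wise expansion, the elimination of $a$ and $c$ via $a=b'/\nu$ and $c=d'/\nu$, and the Wronskian remark linking nondegeneracy of $\Phi$ to linear independence of $y_1,y_2$ are all fine, and the appeal to the standing hypothesis that $\nu$ does not vanish on $\Sigma$ is the right justification for the division step.
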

    By means of \cref{th:associatedode}, we can choose functions $\nu$ and $\rho$ in the potential (\ref{eq:offdpot}) so that the Bessel equation becomes the associated \ODE (\ref{eq:scalar2ndode}) in the initial value problem (\ref{eq:IVP}).\\

    The \emph{Bessel equation} \cite{Bow} is the linear second-order \ODE given by
    \begin{equation}\label{eq:BE1}
      z^2 y''+z y'+(z^2-\alpha^2)y=0.
    \end{equation}
    Equivalently, dividing through by $z^2$,
    \begin{equation}\label{eq:BE2}
      y''+\frac{1}{z} y'+\left(1-\frac{\alpha^2}{z^2}\right)y=0.
    \end{equation}
    The paramater $\alpha$ is an arbitrary complex number. This equation has one regular singularity at $z=0$ and one irregular singularity of rank $2$ at $\infty$ (see \cite{Kris}), and for this reason the Bessel equation does not belong to the class of \emph{Fuchsian equations}. Each of these punctures will correspond to one end on the \CMC surface; the regular singularity generates a \emph{regular} end (asymptotic to a Delaunay surface) while the irregular singularity generates an \emph{irregular} end.\\

    Consider again the off-diagonal potential in (\ref{eq:offdpot}) and let us choose functions
    \begin{equation}\label{eq:functionspotential}
      \nu:=\frac{1}{z}\quad\text{and}\quad\rho:=-z + \frac{\alpha^2}{z}.
    \end{equation}
    Plugging $\nu$ and $\rho$ into \cref{eq:scalar2ndode}, one obtains the Bessel equation and, in particular, prescribes this \ODE in the algorithm for \CMC surfaces seen in \cref{sec:gwr}.

  %

  \section{The $z^AP$ Lemma and perturbations at a simple pole}\label{sec:perturbations}

    Consider a differential equation $\dd\Phi=\Phi\xi$ for which the potential can be written as $\xi=A{dz\over z-z_k}+\bigO(z^0)\;dz$, that is, it has a simple pole at $z=z_k$. A basic result in \ODE theory can be extended to the context of loops. This is known as the $z^AP$ \emph{lemma} (see \cite[Lemma 2.3]{KilRS}) and states that under certain conditions on the eigenvalues of $A$, there exists a solution of the form $\Phi=Cz^AP=C\exp\left(A\log{z}\right)P$, where $P$ extends holomorphically to $z=z_k$ and $C$ is in the loop group of $\SL(\C)$.
    \begin{lemma}\cite[Lemma 14]{KilKRS}\label{th:zap}
      Let $A:\C^*\to\sll(\C)$ be an analytic map with non-constant eigenvalues $\pm\mu(\lambda)$. Let $\xi=A(\lambda)dz/z+\bigO(z^0)dz$ in a neighbourhood of $z=0$.\\
      Let $M:\C^*\to\SL(\C)$ be a monodromy of $\xi$ associated to a once-wrapped closed curve around $z=0$. Then $\tr M= 2\cos(2\pi\mu)$ on $\C^*$.
    \end{lemma}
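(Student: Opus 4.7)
The plan is to reduce the statement to a direct computation via the $z^AP$ Lemma. First I would verify that the eigenvalue hypothesis on $A$ allows us to apply that lemma in a punctured neighbourhood of $z=0$, producing a solution of the form $\Phi = C\,z^A P$, where $z^A := \exp(A\log z)$, $P$ is holomorphic at $z=0$, and $C$ is a loop in $\SL(\C)$. This representation is the core of the argument, because it isolates the multivalued part of $\Phi$ into the factor $z^A$, whereas $C$ and $P$ are single-valued (the former in $z$, the latter in a neighbourhood of $0$).

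Next I would compute the effect of the deck transformation $\tau$ on this representation. Since $\tau$ sends $\log z$ to $\log z + 2\pi i$ and $A$ commutes with itself, one has
\begin{equation*}
  \tau^* z^A = \exp\bigl(A(\log z + 2\pi i)\bigr) = z^A\,e^{2\pi i A}.
\end{equation*}
Combining this with $\tau^*C = C$ and $\tau^*P = P$, the monodromy from \cref{eq:defmonodromy} becomes
\begin{equation*}
  M = (\tau^*\Phi)\,\Phi^{-1} = C\,z^A\,e^{2\pi i A}\,P\,P^{-1}\,z^{-A}\,C^{-1} = C\,e^{2\pi i A}\,C^{-1},
\end{equation*}
where in the last equality we use that $z^A$ and $e^{2\pi i A}$ commute, being both functions of the same matrix $A$. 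Thus $M$ is conjugate to $e^{2\pi i A}$ in $\SL(\C)$; any other choice of fundamental solution $\Phi$ changes $M$ by a further conjugation, so the trace is well-defined.

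Finally I would read off the trace. Since the eigenvalues of $A(\lambda)$ are $\pm\mu(\lambda)$, those of $e^{2\pi i A(\lambda)}$ are $e^{\pm 2\pi i\mu(\lambda)}$, and by conjugation-invariance of trace,
\begin{equation*}
  \tr M(\lambda) = \tr\bigl(e^{2\pi i A(\lambda)}\bigr) = e^{2\pi i\mu(\lambda)} + e^{-2\pi i\mu(\lambda)} = 2\cos\bigl(2\pi\mu(\lambda)\bigr),
\end{equation*}
which is the desired identity on $\C^*$. The only real subtlety is checking that the $z^AP$ Lemma applies: its hypothesis requires the eigenvalues $\pm\mu$ to be non-resonant loop-wise (which is exactly the \emph{non-constant eigenvalues} assumption in the statement), so the factorisation $\Phi = Cz^AP$ exists on a punctured disc and everything else is formal manipulation.
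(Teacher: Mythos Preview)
The paper does not supply its own proof of this lemma: it is quoted verbatim from \cite[Lemma~14]{KilKRS} and used as a black box, so there is nothing in the paper to compare your argument against. That said, your approach is exactly the standard one underlying the cited result: invoke the $z^AP$ factorisation to isolate the multivalued part of a fundamental solution, observe that the monodromy is conjugate to $e^{2\pi i A}$, and read off the trace from the eigenvalues of $A$. The computation you give is correct.

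One point deserves tightening. You write that the non-constant eigenvalue hypothesis is ``exactly'' the non-resonance condition needed for the $z^AP$ Lemma. That is not quite right: the resonance condition is $2\mu(\lambda)\in\Z\setminus\{0\}$, and non-constancy of $\mu$ does not rule this out---it only guarantees that the resonant $\lambda$-values form a discrete subset of $\C^*$. So the factorisation $\Phi=Cz^AP$, and hence your identity $\tr M=2\cos(2\pi\mu)$, is a priori established only off this discrete set. To conclude on all of $\C^*$ you should add one sentence: both $\tr M(\lambda)$ and $2\cos(2\pi\mu(\lambda))$ are analytic in $\lambda$ on $\C^*$, so the identity extends across the resonant points by continuity. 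With that addition the argument is complete.
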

    A \emph{Delaunay residue} is a meromorphic $\sll(\C)$-valued map of the form
    \begin{equation}\label{eq:residue}
      A=\begin{pmatrix} c & a\lambda^{-1}+\bar{b}\\ \bar{a}\lambda+b & -c\end{pmatrix},
    \end{equation}
    with $a,b\in\C^*$ and $c\in\R$. Any Delaunay surface in $\R^3$, up to rigid motion, can be derived \cite{KIS} from an off-diagonal Delaunay residue (\ref{eq:residue}) with $a,b\in\R^*$ satisfying the closing condition $a+b=1/2$, so that the resulting surface's \emph{necksize} depends on $ab$: when $ab>0$, the surface is an \emph{unduloid}, when $ab < 0$, it is a \emph{nodoid}, and when $a = b$, it is a round cylinder (see for instance \cite{KilRS}). On the other hand, a \emph{perturbation} of a Delaunay potential is a potential of the form
    \begin{equation}\label{eq:perturbation}
      \xi=A\frac{dz}{z}+\bigO(z^0)dz
    \end{equation}
    where $A$ is as in (\ref{eq:residue}). That a \CMC end obtained from a perturbation of a Delaunay potential is asymptotic to a half-Delaunay surface is proven in \cite[Theorem 5.9]{KilRS}. We will use this result in the next section.
  %

  \section{Constructing $\mathrm{CMC}\;$ cylinders}\label{sec:cylinders}

    Constructing \CMC surfaces with two ends via the Bessel equation is in the following two steps:
    \begin{itemize}
      \item Write down a potential on $\C^*$ which prescribes the Bessel equation as associated \ODE and which is locally gauge-equivalent to a perturbation of a Delaunay potential at $z=0$ (\cref{def:potential}).
      \item Show that the monodromy representation $M$ is unitary in $\Sp^1$ (\cref{sec:monodromy}).
    \end{itemize}

    \subsection{Cylinder potential}\label{sec:potential}
      In this part the potential which will be used to produce cylinders with one irregular end and one asymptotic Delaunay end is defined. Near the puncture $z=0$ the potential is a local perturbation of a Delaunay potential via gauge equivalence.
      \begin{definition}\label{def:potential}
        Let $\Sigma=\C^*$ and let $r\in(-\infty,1)\setminus\lbrace 0\rbrace$. Define the $\sll(\C)$-valued cylinder potential by
        \begin{equation}\label{eq:xiC}
          \xi_c=\begin{pmatrix} 0 & \lambda^{-1}\\ \lambda Q_t & 0\end{pmatrix}\;dz,
        \end{equation}
        where $Q_t=\left(-\frac{r}{4 z^2}t-1\right)$ and $t:=-{1\over 4}\lambda^{-1}(\lambda-1)^2$, for all $\lambda\in\Sp^1$.
      \end{definition}
      \begin{remark}
        Note that if $\lambda=1$, then $t=0$ and the potential (\ref{eq:xiC}) becomes holomorphic so the monodromy $M(\lambda=1)=\identity$. Hence, \cref{eq:monodromyproblem2} and \cref{eq:monodromyproblem3} of the monodromy problem are automatically solved.
      \end{remark}
      %
    %

    \subsection{Local gauge}\label{sec:gauge}
      In this part we use the theory of gauging to see that the potential in \cref{sec:BE} with associated \ODE the Bessel equation, is equivalent to the potential defined in \ref{def:potential}. Then, it is shown that the double pole of the potential in (\ref{eq:xiC}) can be gauged to a simple pole with Delaunay residue.\\

      Consider the potential $\xi$ introduced in \cref{sec:BE}, namely
      \begin{equation}\label{eq:firstxi}
        \xi=\begin{pmatrix} 0 & 1/z\\ -z + \alpha^2/z & 0\end{pmatrix}\;dz.
      \end{equation}
      Note that for
      \begin{equation}\label{eq:gauges1}
        g_1=\begin{pmatrix} (1/z)^{1/2} & 0\\ 0 & (1/z)^{-1/2}\end{pmatrix},\quad g_2=\begin{pmatrix} 1 & 0\\ \frac{1}{2z} & 1\end{pmatrix}.
      \end{equation}
      the gauge $\xi.(g_1g_2)$ gives
      \begin{equation}\label{eq:secondxi}
        \xi=\begin{pmatrix} 0 & 1\\ -1 + \frac{-1 + 4 \alpha^2}{4 z^2} & 0\end{pmatrix}\;dz.
      \end{equation}
      Putting $\alpha:=\frac{1}{2}\sqrt{1 - r t}$ (note that the parameter $\alpha$ depends on $\lambda$) and gauging by $\Lambda=\diag(\lambda^{1/2},\lambda^{-1/2})$ we obtain the constructing potential $\xi_c$ from \cref{def:potential}.
      \begin{lemma}\label{th:gauging}
        Let $\xi_c$ a potential as in (\ref{eq:xiC}). Then there exists a neighbourhood $U$ of $z=0$ and a positive gauge $g$ such that the expansion of $\xi_c.g$ is
        \begin{equation}\label{eq:xigauged}
          A\frac{dz}{z}+\bigO(z^0)dz,\quad\text{where}\;A=\begin{pmatrix} 0 & a\lambda^{-1}+b\\ a\lambda+b & 0\end{pmatrix}
        \end{equation}
        for some $a,b\in\R$ with $a+b=1/2$.
      \end{lemma}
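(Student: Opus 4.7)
The plan is to construct an explicit local gauge $g$ near $z=0$ by combining the gauge chain recorded in \cref{sec:gauge} with a constant (in $z$) diagonal conjugation. The first task is to pin down the parameters $a$ and $b$. The $z^A P$ lemma forces the eigenvalues $\pm\mu(\lambda)$ of the target residue $A$ to satisfy $\mu^{2}=(a\lambda^{-1}+b)(a\lambda+b)=a^{2}+b^{2}+ab(\lambda+\lambda^{-1})$, while the Bessel equation attached to $\xi_c$ in \cref{sec:gauge} furnishes the squared characteristic exponent $\alpha^{2}=(1-rt)/4=1/4-r/8+(r/16)(\lambda+\lambda^{-1})$. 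Equating these two Laurent polynomials in $\lambda$ coefficient by coefficient yields $ab=r/16$ and $a^{2}+b^{2}=1/4-r/8$, so $(a+b)^{2}=1/4$ and choosing the positive root gives the closing relation $a+b=1/2$. Hence $a,b=(1\pm\sqrt{1-r})/4$ are the two real roots of $x^{2}-x/2+r/16=0$, both real because $r<1$.

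For the gauge itself I would reverse the chain of \cref{sec:gauge}: since $\xi_c=\xi.(g_1 g_2 \Lambda)$ for $\xi$ the Bessel potential \eqref{eq:firstxi}, pulling $\xi_c$ back along $(g_1 g_2 \Lambda)^{-1}$ recovers $\xi$, which already has a simple pole at $z=0$ with residue $R_0=\begin{pmatrix} 0 & 1\\ \alpha^{2} & 0 \end{pmatrix}$. A further constant diagonal conjugation $h=\diag(h_1,h_1^{-1})$ with $h_1^{-2}=a\lambda^{-1}+b$ then carries $R_0$ to the target $A$: the identity $(a\lambda^{-1}+b)(a\lambda+b)=\alpha^{2}$, which was precisely the condition in the previous step, is what makes the conjugated lower-left entry equal $a\lambda+b$. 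Taking $g=(g_1 g_2 \Lambda)^{-1}h$ and expanding near $z=0$ then produces $\xi_c.g=A\,dz/z+\bigO(z^{0})dz$ as required.

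The main obstacle is to verify that this composite gauge $g$ is genuinely a positive gauge on a neighbourhood $U$ of $z=0$ in $\Sigma$. Two sources of apparent branching enter the naive construction: the $z^{\pm 1/2}$ factors inside $g_1$, and the square root $\sqrt{a\lambda^{-1}+b}$ defining $h_1$. I would handle this by multiplying everything out in a Laurent expansion about $z=0$ and using the quadratic relation between $\alpha^{2}$ and $(a\lambda^{-1}+b)(a\lambda+b)$ to show that the fractional powers of $z$ and the half-integer powers of $\lambda$ cancel pairwise, so that the final $g$ is single-valued on a punctured disc around $z=0$ and holomorphic in $\lambda$ on the open unit disc. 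This bookkeeping, rather than any conceptual point, is the technical heart of the proof; the closing condition $a+b=1/2$ is then automatic from the parameter identification.
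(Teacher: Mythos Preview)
Your approach is correct and, once the dust settles, produces the very same gauge as the paper up to a scalar normalisation. The paper simply writes down
\[
g_{1c}=\begin{pmatrix} z^{1/2} & 0\\ 0 & z^{-1/2}\end{pmatrix},\qquad
g_{2c}=\begin{pmatrix} 1 & 0\\ -\tfrac{1}{2}\lambda & a+b\lambda\end{pmatrix},
\]
declares $a=\tfrac{1}{4}(1+\sqrt{1-r})$, $b=\tfrac{1}{4}(1-\sqrt{1-r})$, and checks by direct computation that $\xi_c.(g_{1c}g_{2c})$ has the form \eqref{eq:xigauged}. There is no derivation of where $a,b$ or the gauge come from; it is a one-line verification. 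Your route---matching $\mu^{2}=\alpha^{2}$ to obtain $ab=r/16$ and $a+b=1/2$, then reversing the chain $g_{1}g_{2}\Lambda$ back to the Bessel potential \eqref{eq:firstxi} and conjugating by $h$---is more explanatory, and if you multiply it out you find
\[
(g_{1}g_{2}\Lambda)^{-1}h=(a+b\lambda)^{-1/2}\,g_{1c}g_{2c},
\]
i.e.\ exactly the paper's gauge with the determinant normalised to~$1$.

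One correction to your last paragraph: the half-integer powers of $\lambda$ do combine as you hope (since $\lambda^{-1/2}h_{1}=(a+b\lambda)^{-1/2}$ is holomorphic at $\lambda=0$), but the $z^{\pm 1/2}$ factors do \emph{not} cancel in $g$; they survive in both your gauge and the paper's $g_{1c}$. The paper does not comment on this at all and uses ``positive gauge'' loosely, so your bookkeeping concern, while legitimate, is not something the paper's proof resolves either---it is simply ignored.
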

      \begin{proof}
        Let
        \begin{equation}\label{eq:gauges2}
          g_{1c}=\begin{pmatrix} z^{1/2} & 0\\ 0 & z^{-1/2}\end{pmatrix},\quad g_{2c}=\begin{pmatrix} 1 & 0\\ -\frac{1}{2}\lambda & a+b\lambda\end{pmatrix}.
        \end{equation}
        Taking the real values $a=\frac{1}{4}\left(1+\sqrt{1-r}\right)$ and $b=\frac{1}{4}\left(1-\sqrt{1-r}\right)$, then $\xi_c.(g_{1c}g_{2c})$ has a simple pole at $z=0$ and is of the form (\ref{eq:xigauged}). Therefore, $g=g_{1c}g_{2c}$ is the required gauge.
      \end{proof}
      %
    %

    \subsection{Unitary monodromy on the twice-puncture Riemann sphere}\label{sec:monodromy}

      Since the fundamental group of the twice-punctured Riemann sphere has only $1$ generator, we just need to solve the monodromy problem (\ref{eq:monodromyproblem}) at $z=0$. Recall that, with our choice of potential (\ref{eq:xiC}), (\ref{eq:monodromyproblem2}) and (\ref{eq:monodromyproblem3}) of the monodromy problem are already solved.\\

      For $\Phi_0=\identity$ and $z_0=1$, consider a solution $\Phi$ of (\ref{eq:IVP}) for $\xi=A$ with $A$ as in \cref{th:gauging}, that is, satisfying that $a,b\in\R$ and $a+b=1/2$. Then $\Phi=z^A$ and the monodromy is given by
      \begin{equation}\label{eq:M}
        M=\exp(2\pi iA)=\cos(2\pi\mu)\identity+\frac{1}{\mu}\sin(2\pi\mu)A,
      \end{equation}
      where it holds for the eigenvalues that
      \begin{equation}\label{eq:mu}
        \mu^2=a^2+b^2+ab(\lambda^{-1}+\lambda).
      \end{equation}
      Thus, since $a+b=1/2$, $M$ is unitary for all $\lambda\in\Sp^1$ and then also (\ref{eq:monodromyproblem1}) is solved. The immersion will be well defined.
    %

    %
    \begin{figure}[t] 
      \centering
      \includegraphics[width=5.5cm]{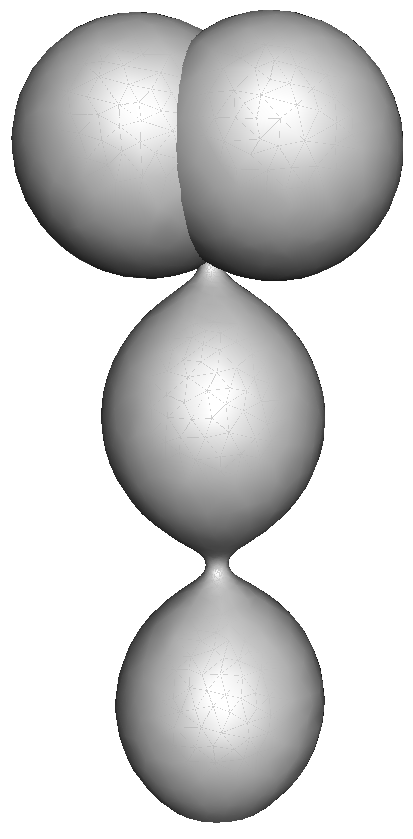}
      \includegraphics[width=7cm]{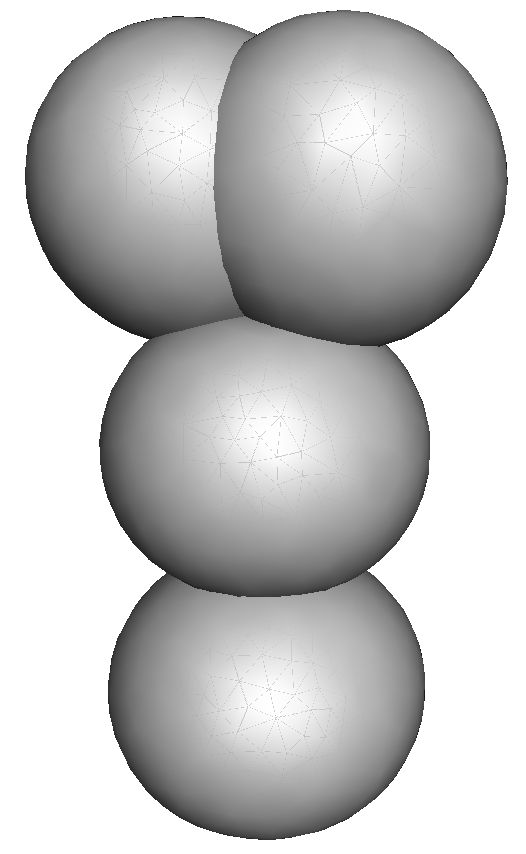}
      \caption{\label{fig:examples1} \footnotesize Cylinders with one irregular end and one Delaunay end. The regular end weights are $1/3$ and $-1/4$, so the surfaces have one unduloid end and one nodoid end respectively.}
    \end{figure}
    \begin{figure}[t] 
      \centering
      \includegraphics[width=4cm]{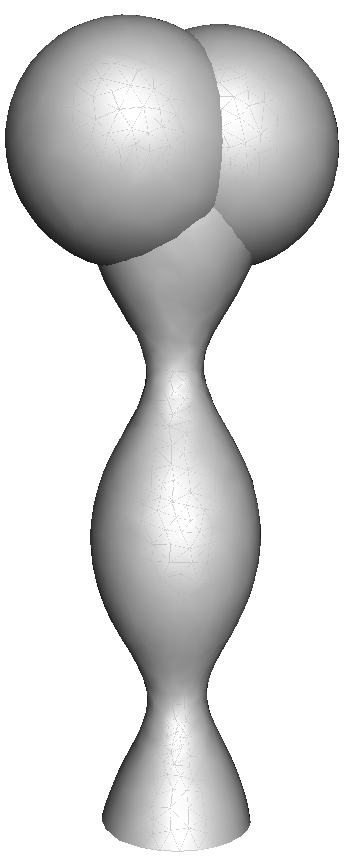}
      \includegraphics[width=8.5cm]{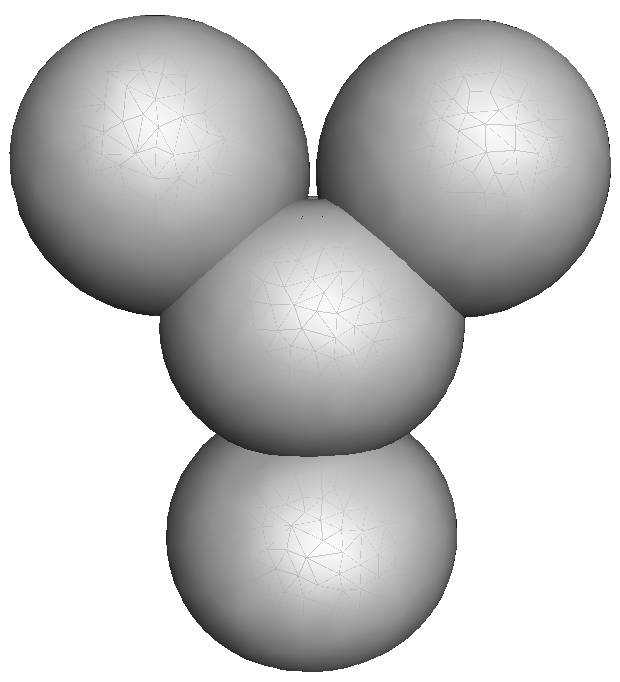}
      \caption{\label{fig:examples2} \footnotesize Cylinders with one irregular end and one Delaunay end. The regular end weights are $1/\sqrt{2}$ and $-1/\pi$, so the surfaces have one unduloid end and one nodoid end respectively.}
    \end{figure}
    %
    
    \subsection{Main theorem}\label{sec:finaltheorem}
      \begin{theorem}\label{th:maintheorem}
        Let $\Sigma=\C^*$ and let $r\in(-\infty,1)\setminus\lbrace 0\rbrace$. Then, there exists a conformal \CMC immersion $f:\Sigma\to\R^3$ with one end which is asymptotic to half Delaunay surface and one irregular end.
      \end{theorem}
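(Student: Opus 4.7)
The plan is to assemble the ingredients already established in the preceding sections and run them through the DPW recipe of \cref{sec:gwr}. First, I would take $\xi_c$ from \cref{def:potential}, noting that it is a holomorphic $\sll(\C)$-valued $1$-form on $\Sigma=\C^*$ with a simple pole in $\lambda$ in its upper-right entry (and no other poles for $|\lambda|<1$), so it is an admissible DPW potential. Fixing a base point $z_0\in\C^*$ and an initial condition $\Phi_0=\identity$, I would solve the holomorphic initial value problem $\dd\Phi=\Phi\,\xi_c$ on the universal cover $\widetilde\Sigma=\C$, then perform point-wise Iwasawa factorization $\Phi=FB$ and apply the Sym--Bobenko formula $f=(\partial_\lambda F)F^{-1}$ to produce a conformal \CMC immersion $\widetilde\Sigma\to\R^3$.

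Next, I would descend this immersion to $\Sigma=\C^*$ by verifying the closing conditions (\ref{eq:monodromyproblem}) for the monodromy $M$ along the generator $\tau$ of $\pi_1(\C^*)$ around $z=0$. Conditions (\ref{eq:monodromyproblem2}) and (\ref{eq:monodromyproblem3}) are free by the remark after \cref{def:potential}: at $\lambda=1$ the coefficient $t$ vanishes, so $\xi_c$ is holomorphic on $\Sigma$ and the $\lambda=1$ monodromy is trivial, and a short computation in $\lambda$ around $\lambda=1$ shows $\partial_\lambda M|_{\lambda=1}=0$. For the unitarity (\ref{eq:monodromyproblem1}), I would invoke \cref{th:gauging} to gauge $\xi_c$ locally near $z=0$ to the form $A\,dz/z+\bigO(z^0)dz$ with a Delaunay residue $A$ whose parameters satisfy $a+b=1/2$ and $a,b\in\R$. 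Then the $z^AP$-type argument gives $M$ conjugate to $\exp(2\pi i A)$, and the explicit expression (\ref{eq:M}) combined with (\ref{eq:mu}) shows that on $\Sp^1$ this matrix lies in $\SU$; the unitarity is exactly what closes the period.

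Finally, I would identify the asymptotic behaviour of the two ends. At the regular singularity $z=0$, \cref{th:gauging} exhibits $\xi_c$ as gauge-equivalent to a perturbation of a Delaunay potential as in (\ref{eq:perturbation}), so \cite[Theorem~5.9]{KilRS} applies and yields that this end is asymptotic to a half Delaunay surface (an unduloid for $r\in(0,1)$ and a nodoid for $r<0$, corresponding to the sign of $ab=(1-(1-r))/16=r/16$ up to the $a+b=1/2$ normalisation). At $z=\infty$, the Bessel equation has an irregular singularity of rank $2$, inherited by $\xi_c$ through the construction of \cref{sec:BE}, so by definition this produces the irregular end of the immersion.

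The only nontrivial step is the monodromy analysis, but the bookkeeping is entirely transparent because the problematic $\lambda$-variation has been arranged to concentrate in $t=-\tfrac14\lambda^{-1}(\lambda-1)^2$, which vanishes to second order at $\lambda=1$ and is real on $\Sp^1$; this simultaneously forces (\ref{eq:monodromyproblem2})--(\ref{eq:monodromyproblem3}) and, after gauging, produces the real Delaunay residue needed for (\ref{eq:monodromyproblem1}). Once the monodromy is handled, the end asymptotics follow directly from cited results, and the theorem is complete.
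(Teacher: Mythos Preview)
Your proposal is correct and follows essentially the same route as the paper: you take the potential $\xi_c$ of \cref{def:potential}, solve the initial value problem, use the remark after \cref{def:potential} together with \cref{th:gauging} and the Delaunay residue computation of \cref{sec:monodromy} to close the period, and then invoke the asymptotics theorem of \cite{KilRS} for the Delaunay end while identifying the irregular end with the irregular singularity of the Bessel equation. Your write-up is somewhat more explicit about the role of $t$ and the unduloid/nodoid dichotomy, but the logical skeleton is identical.
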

      \begin{proof}
        Let $\xi_c$ be a cylinder potential as in \cref{def:potential}. A solution $\Phi$ of (\ref{eq:IVP}) for $\xi_c$, $\Phi_0=\identity$ and $z_0=1$ can be found with the $z^AP$ lemma \cite{KilKRS}. Let $M$ be the monodromy of $\Phi$ at the puncture $z=0$. By the remarks in \cref{sec:monodromy}, $M$ is unitary and the monodromy problem (\ref{eq:monodromyproblem}) at $z=0$ is solved. Then, the general Weierstrass representation \cite{DPW} constructs a \CMC immersion $f$ in $\R^3$ which has two ends corresponding to the singularities from the Bessel equation - the \ODE associated to $\dd\Phi=\Phi\xi_c$. By \cref{th:gauging}, the potential is locally gauge-equivalent to a Delaunay potential and thus, by the asymptotics theorem of \cite{KilRS}, the end at $z=0$ is asymptotic to half Delaunay surface.
      \end{proof}
      %
    %

  \section{Symmetry}\label{sec:symmetries}

    In this last part, we explore some symmetries appearing in the surfaces constructed in \cref{sec:cylinders}. We prove that these symmetries in the resultant surfaces can be tracked to the level of the potentials, which are transformed under automorphisms on the domain.\\

    A transformation $\phi$ is an involution if $\phi^2=\id$ but $\phi\neq\id$. The following proposition holds for elements of $\Iso(\R^3)$ that are involutions.
    \begin{proposition}\label{th:involutions}
      The involutory isometries are the reflections and the half-turns (rotations by $\pi$).
    \end{proposition}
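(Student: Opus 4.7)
The plan is to exploit the affine presentation of $\Iso(\R^3)$ and reduce the statement to a case analysis on the linear part. Write $\phi(x) = Ax + b$ with $A \in O(3)$ and $b \in \R^3$. A direct computation gives $\phi^2(x) = A^2 x + (A+I)b$, so the involution condition $\phi^2 = \id$ is equivalent to the two equations $A^2 = I$ and $(A+I)b = 0$. The hypothesis $\phi \neq \id$ excludes the trivial pair $(A,b)=(I,0)$.

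Next I would classify the linear part. Since $A$ is orthogonal and $A^2 = I$, one has $A^{\,T} = A^{-1} = A$, so $A$ is symmetric and hence diagonalizable over $\R$ with eigenvalues in $\{\pm 1\}$. This yields an orthogonal decomposition $\R^3 = E_+ \oplus E_-$ into the $\pm 1$-eigenspaces, and the side condition $(A+I)b = 0$ is exactly $b \in E_-$. Ruling out $E_- = \{0\}$ (which forces $b = 0$ and $\phi = \id$), the remaining possibilities are $\dim E_- \in \{1,2,3\}$.

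In each case I would identify the fixed-point set of $\phi$ by solving $(A - I)x = -b$; noting that $A - I$ acts as $-2\,\id$ on $E_-$ and as $0$ on $E_+$, the particular solution $x = b/2$ works and the fixed affine subspace is $\tfrac{b}{2} + E_+$. When $\dim E_- = 1$ the linear part is the plane reflection through $E_+$, and $\phi$ is the reflection across the affine plane $\tfrac{b}{2} + E_+$. When $\dim E_- = 2$ the linear part is the half-turn about $E_+$, and $\phi$ is the rotation by $\pi$ about the affine axis $\tfrac{b}{2} + E_+$. The remaining case $\dim E_- = 3$ yields $A = -I$ and the central inversion through $\tfrac{b}{2}$, which the statement subsumes by decomposing it as the commuting product of a planar reflection and the half-turn about the normal axis. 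This last case is the only delicate point of the argument, and it is handled by observing that such a decomposition is always available since the two factors commute and each is an involution; the rest is routine linear algebra and presents no serious obstacle.
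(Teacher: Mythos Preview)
The paper states this proposition without proof, treating it as a classical fact, so there is no argument in the text to compare against. Your affine presentation is the natural route, and the core of it is correct: writing $\phi(x)=Ax+b$, deducing $A^2=I$ and $b\in E_-$, and identifying the fixed affine subspace as $\tfrac{b}{2}+E_+$ all go through exactly as you describe.

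The genuine gap is your handling of the case $\dim E_-=3$. The map $x\mapsto -x+b$ is the central inversion (point reflection) through $\tfrac{b}{2}$; it is an involutory isometry of $\R^3$ with a single fixed point, and it is neither a reflection in a plane (whose fixed set is a plane) nor a half-turn about a line (whose fixed set is a line). Your proposed remedy---writing it as the commuting product of a plane reflection and a half-turn---does not rescue the statement: the proposition classifies individual involutory isometries, not factorisations, and the identity $-I=R\cdot H$ does not make $-I$ itself a reflection or a half-turn. In effect your case analysis has \emph{discovered} a missing case rather than disposed of it. Either the word ``reflection'' must be read broadly enough to include point reflections, or the proposition as literally stated is incomplete; for the application the paper actually makes at the end of Theorem~\ref{th:reflectionsymmetry}, what is really needed is only the weaker assertion that an orientation-reversing involution which is not a point reflection is a plane reflection, and your argument does yield that.
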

    In what follows, to lighten notation, we may denote the dependence on $\lambda$ with a subscript, that is,
    \begin{equation}
      \begin{aligned}
        \xi_\lambda=\xi(z,\lambda),\\
        \Phi_\lambda=\Phi(z,\lambda),\\
        F_\lambda=F(z,\lambda).
      \end{aligned}
    \end{equation}

    \subsection{Cylinders with one reflection}\label{sec:reflection}

      Consider the orientation reversing automorphism of $\Sigma$ given by $\sigma(z)=\bar{z}$. It defines a symmetry that reflects the domain across the real axis. We prove the following
      \begin{theorem}\label{th:reflectionsymmetry}
        Consider a potential $\xi$ that generates via the Weierstrass representation a \CMC family of immersions $f$. Suppose that $\xi$ satisfies the symmetries
        \begin{equation}\label{eq:reflectionsymmetrypotentials}
          \begin{aligned}
            \xi_{1/\lambda} &=\overline{\sigma^*\xi_{1/\bar{\lambda}}},\\
            G^{-1}\xi_\lambda G &=\overline{\sigma^*\xi_{1/\bar{\lambda}}},
          \end{aligned}
        \end{equation}
        where $G=\diag(1/\lambda,\lambda)$. Then, the induced immersion $\breve{f}=\Sym\left[\overline{\sigma^*F_{1/\bar{\lambda}}}\right]$ possesses reflective symmetry by a plane.
      \end{theorem}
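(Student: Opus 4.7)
The plan is to translate the two symmetry conditions on $\xi$ first to a relation at the level of the extended frame $\Phi$, then to the unitary factor $F$ via the Iwasawa factorisation, and finally to read off the reflective symmetry from the Sym-Bobenko formula. Define $\breve{\Phi}(z,\lambda) := \overline{\sigma^*\Phi_{1/\bar{\lambda}}}(z)$. Because $\overline{\Phi(\bar{z},\mu)}$ is holomorphic in $z$ whenever $\Phi(\cdot,\mu)$ is, a direct computation gives $\dd\breve{\Phi} = \breve{\Phi}\cdot\overline{\sigma^*\xi_{1/\bar{\lambda}}}$. By the second identity in \cref{eq:reflectionsymmetrypotentials}, $\overline{\sigma^*\xi_{1/\bar{\lambda}}} = G^{-1}\xi_{\lambda}G$; since $G$ is independent of $z$, the matrix $\Psi := \breve{\Phi}G^{-1}$ then satisfies $\dd\Psi = \Psi\,\xi_{\lambda}$, the same equation as $\Phi$. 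Normalising $\Phi(z_0,\lambda) = \identity$ at a real point $z_0 = 1$ yields $\breve{\Phi} = G^{-1}\Phi\,G$. An analogous argument using the first identity in \cref{eq:reflectionsymmetrypotentials} produces $\breve{\Phi} = \Phi_{1/\lambda}$, so altogether
\[
\Phi(z,1/\lambda) = G(\lambda)^{-1}\Phi(z,\lambda)G(\lambda).
\]

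Next I would apply the Sym-Bobenko formula. A short tangential-derivative computation on $\Sp^1$ yields the general identity
\[
\Sym\big[\overline{H_{1/\bar{\lambda}}}\big]\big|_{\lambda=1} = -\,\overline{\Sym[H_{\lambda}]\big|_{\lambda=1}},
\]
valid for any smooth loop $H:\Sp^1\to\SU$. Applied with $H(\lambda) := F(\bar{z},\lambda)$, and combined with the definition of $\breve{F}$, this gives unconditionally
\[
\breve{f}(z) = -\overline{f(\sigma(z))} = \mathcal{R}\big(f(\sigma(z))\big),
\]
where $\mathcal{R}:X\mapsto -\overline{X}$ is an isometric involution of $\su\cong\R^3$. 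Using the relations derived in the previous paragraph together with the uniqueness of the Iwasawa decomposition, one then argues that $\breve{F}$ and $F$ agree up to a gauge acting trivially on the Sym-Bobenko formula at $\lambda=1$; this forces $\breve{f} = f$, and substitution gives $f = \mathcal{R}\circ f\circ\sigma$, a reflective symmetry inherited by $\breve{f}$.

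By \cref{th:involutions} it remains only to rule out that $\mathcal{R}$ is a half-turn. Writing $X = \begin{pmatrix}ia & b+ic \\ -b+ic & -ia\end{pmatrix}\in\su$ with $a,b,c\in\R$, one checks that $\mathcal{R}$ acts as $(a,b,c)\mapsto(a,-b,c)$ on $\R^3$; its fixed set is the two-dimensional plane $\{b=0\}$, so $\mathcal{R}$ is a reflection, not a half-turn.

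The main obstacle will be the gauging step relating $\breve{\Phi} = G^{-1}\Phi G$ to $\breve{F}$: since $G = \diag(\lambda^{-1},\lambda)$ is not a positive loop, the expression $G^{-1}FBG$ is not immediately in standard Iwasawa form, and a careful Birkhoff/Iwasawa analysis is needed to verify that the Sym-Bobenko image of the $\breve{F}$ defined in the statement indeed coincides with that extracted from the Iwasawa factorisation of $\breve{\Phi}$.
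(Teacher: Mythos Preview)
Your overall architecture---potential symmetry $\Rightarrow$ symmetry of $\Phi$ $\Rightarrow$ Iwasawa $\Rightarrow$ Sym--Bobenko $\Rightarrow$ reflection---is the same as the paper's, and your unconditional identity $\breve f(z)=-\overline{f(\sigma z)}$ together with the explicit check that $X\mapsto-\overline X$ fixes a plane is a pleasant and more concrete alternative to the paper's abstract involution argument. The paper instead keeps a general diagonal initial condition $\Phi_0$, obtains a $z$-independent unitary loop $R$ with $R\Phi_\lambda=\overline{\sigma^*\Phi_{1/\bar\lambda}}\,G^{-1}$, passes through Iwasawa to get $\overline{\sigma^*F_{1/\bar\lambda}}=RFU$, and then reads off
\[
\breve f \;=\; (\partial_\lambda R)R^{-1}\;+\;R\,f\,R^{-1},
\]
i.e.\ $\breve f$ is a \emph{rigid motion} of $f$, not equal to it. The paper then shows this rigid motion is an involution by applying the transformation a second time and using $\overline{R_{1/\bar\lambda}}\,R=\identity$, concluding ``reflection'' from \cref{th:involutions} and orientation reversal.

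The genuine gap in your proposal is precisely the step you flag: the claim that ``$\breve F$ and $F$ agree up to a gauge acting trivially on the Sym--Bobenko formula at $\lambda=1$; this forces $\breve f=f$.'' This is stronger than what the Iwasawa analysis actually delivers, and in general false. Even with your normalisation $\Phi_0=\identity$ (so $R=G^{-1}$), one has $R|_{\lambda=1}=\identity$ but $(\partial_\lambda R)R^{-1}|_{\lambda=1}\neq 0$, so the displayed formula above produces a nontrivial translation. Consequently the symmetry of the surface is not your $\mathcal R$ alone but $\mathcal R$ composed with this rigid motion, and your final paragraph (checking that $\mathcal R$ fixes a $2$-plane) no longer settles the matter. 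You would need either to carry out the Iwasawa step in full and show that the extra rigid motion is trivial (which it is not in general), or---as the paper does---argue directly that the composite isometry is an involution, for instance by exploiting $\sigma^2=\id$ together with the relation $\overline{R_{1/\bar\lambda}}\,R=\identity$.
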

      \begin{proof}
        Let $\Phi_\lambda$ be the solution of $\dd\Phi_\lambda=\Phi_\lambda\xi_\lambda$, $\Phi_\lambda(z_0)=\Phi_0$, with $z_0\in\Sigma$ and $\Phi_0$ diagonal. Naturally, the transformation $\overline{\sigma^*\Phi_{1/\bar{\lambda}}}$ defines a solution to the differential equation $\;\dd\left(\overline{\sigma^*\Phi_{1/\bar{\lambda}}}\right)=\left(\overline{\sigma^*\Phi_{1/\bar{\lambda}}}\right)\left(\overline{\sigma^*\xi_{1/\bar{\lambda}}}\right)$, which in view of (\ref{eq:reflectionsymmetrypotentials}) reads as
        \begin{equation}
          \dd\left(\overline{\sigma^*\Phi_{1/\bar{\lambda}}}\right)=\left(\overline{\sigma^*\Phi_{1/\bar{\lambda}}}\right)\left(\xi_\lambda\,.\,G\right).
        \end{equation}
        Since this \ODE is also solved by $\Phi_\lambda G$, i.e.
        \begin{equation}
          \dd\left(\Phi_\lambda G\right)=\left(\Phi_\lambda G\right)\left(\xi_\lambda\,.\,G\right),
        \end{equation}
        both solutions only differ by a matrix $R$ in the loop group of $\SL(\C)$. Hence, $\Phi_\lambda$ has the symmetry
        \begin{equation}\label{eq:reflectionsolutionsymmetry2}
          R\Phi_\lambda=\overline{\sigma^*\Phi_{1/\bar{\lambda}}}G^{-1},
        \end{equation}
        for some $z$-independent $R$.\\

        Evaluation at the fixed point $1$ of $\sigma$, using \cref{eq:reflectionsolutionsymmetry2}, yields $R=\overline{\Phi_0(1/\bar{\lambda})}G^{-1}\Phi_0(\lambda)^{-1}$. Since $\Phi_0$ is diagonal, then by a simple calculation one gets that $R$ is unitary for all $\lambda\in\Sp^1$.\\

        Let us write the Iwasawa splitting $\Phi_\lambda=FB$. When Iwasawa decomposing the solution $\overline{\sigma^*\Phi_{1/\bar{\lambda}}}$ a unitary term $U$ must be introduced, obtaining
        \begin{equation}\label{eq:reflectioniwasawasymmetry}
          \overline{\sigma^*F_{1/\bar{\lambda}}}\,\overline{\sigma^*B_{1/\bar{\lambda}}}=\overline{\sigma^*\Phi_{1/\bar{\lambda}}}=R\Phi_\lambda G=RFUU^{-1}BG.
        \end{equation}
        The uniqueness of this splitting allows us to identify unitary and positive parts respectively, obtaining that
        \begin{equation}\label{eq:reflectionframesymmetry}
          \overline{\sigma^*F_{1/\bar{\lambda}}}=RFU.
        \end{equation}
        This implies that, using the generalised Weierstrass representation, $\overline{\sigma^*\xi_{1/\bar{\lambda}}}$ produces on the one hand the family of immersions given by plugging $\overline{\sigma^*F_{1/\bar{\lambda}}}$ in the Sym-Bobenko formula and on the other hand the one obtained using $RFU$. Consequently, these two surfaces coincide.\\

        At the level of the immersion, the symmetry (\ref{eq:reflectionframesymmetry}) of the unitary frame appears in the Sym-Bobenko formula $\breve{f}=\Sym\left[\overline{\sigma^*F_{1/\bar{\lambda}}}\right]$ as follows:
        \begin{equation}\label{eq:reflectionimmersionsymmetry}
          \begin{aligned}
            \breve{f} &=\left(\partial_\lambda \overline{\sigma^* F_{1/\bar{\lambda}}}\right)\, \left(\overline{\sigma^* F_{1/\bar{\lambda}}}\right)^{-1}\\
            &=\partial_\lambda (RFU)\, \left(RFU\right)^{-1}\\
            &=\left((\partial_\lambda R)R^{-1}+R(\partial_\lambda F)F^{-1}R^{-1}\right).
          \end{aligned}
        \end{equation}

        It is left to prove that this symmetry is a reflection. To do so, we show that the transformation is an involution. It is easy to check that each of the symmetries seen so far remain the same if we `reapply' the transformations done in (\ref{eq:reflectionsymmetrypotentials}) and used throughout this proof. Let us denote by $\breve{\breve{f}}=\Sym\left[\overline{\sigma^*\left(\overline{\sigma^*F_{1/\bar{\lambda}}}\right)_{1/\bar{\lambda}}}\right]$ the resultant immersion of applying again those transformations, and by $'$ the derivative with respect to $\partial_\lambda$. Then,
        \begin{equation}\label{eq:immersioninvolution}
          \begin{aligned}
            \breve{\breve{f}} &=\overline{\sigma^*R_{1/\bar{\lambda}}}\;\partial_\lambda\left(\overline{\sigma^*F_{1/\bar{\lambda}}}\right)\overline{\sigma^*F_{1/\bar{\lambda}}^{-1}}\overline{\sigma^*R_{1/\bar{\lambda}}^{-1}}\\
            &=\bar{R}_{1/\bar{\lambda}}\left(R'R^{-1}+RF'F^{-1}R^{-1}\right)\bar{R}_{1/\bar{\lambda}}^{-1}+\bar{R'}_{1/\bar{\lambda}}\bar{R}_{1/\bar{\lambda}}^{-1}\\
            &=\bar{R}_{1/\bar{\lambda}}RF'F^{-1}R^{-1}\bar{R}_{1/\bar{\lambda}}^{-1}+\bar{R}_{1/\bar{\lambda}}R'R^{-1}\bar{R}_{1/\bar{\lambda}}^{-1}+\bar{R'}_{1/\bar{\lambda}}\bar{R}_{1/\bar{\lambda}}^{-1}\\
          \end{aligned}
        \end{equation}
        An easy computation yields that $\bar{R}_{1/\bar{\lambda}}R=\identity$. Hence, derivating in this equation, one also gets that
        \begin{equation}\label{eq:Rscancel}
          \bar{R}_{1/\bar{\lambda}}R'R^{-1}\bar{R}_{1/\bar{\lambda}}^{-1}=-\bar{R'}_{1/\bar{\lambda}}\bar{R}_{1/\bar{\lambda}}^{-1}.
        \end{equation}
        Putting together \cref{eq:immersioninvolution} and \cref{eq:Rscancel}, we manage to derive that $\breve{\breve{f}}=\left(\partial_\lambda F\right)F^{-1}=\Sym[F_\lambda]=f$. That is, this symmetry is an involution. Since, it is also orientation reversing, by \cref{th:involutions}, this symmetry must be a reflection.
      \end{proof}

      Using \cref{th:reflectionsymmetry}, since the constructing potential (\ref{eq:xiC}) satisfies the relations in (\ref{eq:reflectionsymmetrypotentials}), the resulting surfaces in figures \ref{fig:examples1} and \ref{fig:examples2} have a reflectional symmetry fixing their ends.
    %

  %

  \bibliographystyle{amsplain}
  \providecommand{\bysame}{\leavevmode\hbox to3em{\hrulefill}\thinspace}
  \providecommand{\MR}{\relax\ifhmode\unskip\space\fi MR }
    \providecommand{\MRhref}[2]{%
      \href{http://www.ams.org/mathscinet-getitem?mr=#1}{#2}
    }
    \providecommand{\href}[2]{#2}

\end{document}